\documentclass[
leqno, 
12pt]{amsart}
\usepackage{amssymb,amsmath, amsthm,latexsym, mathrsfs}

\usepackage{mathtools}

\usepackage{a4wide}

\usepackage{hyperref}

\usepackage[all]{xy}
\usepackage{rotating}


\usepackage{color}

\theoremstyle{plain}

\newtheorem{lemma}{Lemma}[section]
\newtheorem{theorem}[lemma]{Theorem}

\newtheorem{remark}[lemma]{Remark}

\parskip=\bigskipamount

\def\R{\mathbb R}

\def\r{\rtimes}
\def\t{\times}
\def\o{\otimes}
\def\e{\epsilon}

\def\ra{\rightarrow}

\def\tha{\twoheadrightarrow}

\def\a{\alpha}
\def\b{\beta}
\def\g{\gamma}

\def\s{\sigma}

\def\BE{\begin{equation}}
\def\EE{\end{equation}}

\parindent=0pt

\keywords{Metaplectic groups, non-archimedean local fields, parabolic induction, principal series representations, irreducibility}
\subjclass[2000]{Primary: 22D12, Secendary:  22E50, 22D30, 11F85.}

\thanks{This work has been  supported  by Croatian Science Foundation under the
project 9364.}

\date{\today}

\begin{document}

\begin{abstract} M. Hanzer and I. Mati\'c have proved in \cite{HMa2} that the genuine unitary principal series representations of the metaplectic groups are irreducible. A simple consequence of that paper 
is a criterion for the irreducibility of the non-unitary principal series representations of the metaplectic groups that we give in this paper.

\end{abstract}

\title{Two simple observations on representations of metaplectic groups}

\author[M. Tadi\'c]{Marko Tadi\'c}

\maketitle

\centerline{\sl In memory of Sibe Marde\v si\'c}

\section{Introduction}

Let $F$ be a local non-archimedean field of characteristic different from 2. 
Denote by $| \ |_F$ the normalized absolute value on $F$ and denote by $\nu$ the character $x\mapsto |x|_F$ of $F^\t=GL(1,F)$.

Maximal Levi subgroups of $GL(n,F)$ are naturally isomorphic to $GL(n_1,F) \t GL(n_2,F)$, $n_1+n_2=n$. Using this isomorphism,
J. Bernstein and A. V. Zelevinsky have introduced the notation $\pi_1\t\pi_2$ for the representations of $GL(n,F)$ parabolically induced by $\pi_1\o\pi_2$, where  $\pi_i$ is a representation of $GL(n_i,F)$ for  $i=1,2$ (see \cite{BZ2} or \cite{Z}). In the case of the symplectic groups, the maximal Levi subgroups of $Sp(2n,F)$ are naturally isomorphic to $GL(n_1,F) \t Sp(2n_2,F)$, $n_1+n_2=n$. Using this isomorphism,
one introduces the notation $\pi\r\s$ for the representation of $Sp(2n,F)$ parabolically induced by $\pi\o\s$, where $\pi$ is a representation of $GL(n_1,F)$ and $\s $ is a representation of $Sp(2n_2,F)$  (see \cite{T-Str} or \cite{T-Comp}). We shall consider central extensions
\begin{equation*}
1\longrightarrow \mu_2\hookrightarrow \widetilde G\overset{p}\tha G\longrightarrow 1
\end{equation*}
 of $G=GL(n,F)$ or $Sp(2n,F)$, where $\mu_2=\{\pm1\}$. For the description of the construction of $\widetilde G$ see the second section and the references there. 
 Let us only note that we realize $\widetilde G$ on the set $G\t\{\pm1\}$.
 A representation of $\widetilde G$ will be called genuine 
 (resp. non-genuine) 
 if $\mu_2$ acts in the representation space by a non-trivial 
 (resp. trivial) 
 character. Each representation $\pi$ of $G$ in a natural way is a non-genuine representation of $\widetilde G$, and we shall identify them in the sequel.
 
 Parabolic subgroups and Levi subgroups  in $\widetilde G$ are defined to be preimages of such subgroups in $G$ (see \cite{HMu1}). Maximal Levi subgroups in $\widetilde {GL(n,F)}$ are no more direct products of $\widetilde{GL(n_1,F)}$ and $ \widetilde{GL(n_2,F)}$, but one can still define in a natural way 
 $\pi_1\t\pi_2$ for genuine representations $\pi_i$ of $\widetilde{GL(n_i,F)}$, $i=1,2$ (see \cite{HMu1}). Analogously, one can define $\pi\r\s$ for genuine representations $\pi$ and $\s$ of $\widetilde{GL(n_1,F)}$ and  $\widetilde{Sp(2n_2,F)}$ respectively.
 
 The genuine irreducible representation of $\widetilde {Sp(0,F)}$ is denoted by $\omega_0$.

 Fix a non-trivial character of $F$ of even conductor. Define as in \cite{Ku}, page 231, the following character 
 $$
 \chi_\psi((g,\e))=\e\g(\psi_{\frac12})\g(\psi_{\frac{\det(g)}2})^{-1}
 $$
 of $\widetilde {GL(k,F)}$, where $\gamma(\eta)$ denotes the Weil index and $\psi_a(x)=\psi(ax)$.
We have $\chi_\psi^4\equiv 1$. 
Then
$
\chi_{\psi}|\mu_2\not\equiv 1,
$
and the multiplication with $\chi_\psi$ defines a bijection between genuine and non-genuine representations of $\widetilde{GL(n,F)}$. 
 
 M. Hanzer and I. Mati\'c have proved in \cite{HMa2} that the genuine unitary principal series representations  of the metaplectic groups are irreducible.
 Now the following theorem gives a criterion for the irreducibility of the genuine  non-unitary principal series representations of the metaplectic group $\widetilde{Sp(2n,F)}$:

 \begin{theorem}  Let $\xi_1, \ldots, \xi_n$ be  (not necessarily unitary) characters of $F^\t$. 
Consider the following two 
conditions:
\begin{enumerate}
\item For any $1 \leq i \leq n$, 
$$
\xi_i \neq \nu^{\pm 1/2}\xi
$$
 for any character $\xi$ of $F^\t$ satisfying $\xi=\xi^{-1}$ (i.e. of order 1 or 2).
\item For any $1 \leq i < j \leq n$, 
$$
\xi_i \neq \nu^{\pm 1} \xi^{\pm
1}_j
$$ 
(all possible combinations of two signs are allowed).
\end{enumerate}
Then the non-unitary principal series representation 
$$
(\chi_{\psi}\xi_1) \times \ldots \times
(\chi_{\psi}\xi_n) \rtimes \omega_0
$$
 of  $\widetilde{Sp(2n,F)}$ is irreducible if and only if conditions (1) and (2) hold.  
\end{theorem}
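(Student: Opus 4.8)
The plan is to prove the two implications separately: ``irreducibility $\Rightarrow$ (1) and (2)'' by transitivity of parabolic induction, and ``(1) and (2) $\Rightarrow$ irreducibility'' by reducing to a Langlands standard module whose tempered part is irreducible precisely by \cite{HMa2}.

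For the first implication I would argue by contraposition, using two known reducibilities: in $\widetilde{Sp(2,F)}$ the representation $\chi_\psi\xi\rtimes\omega_0$ is reducible whenever $\xi$ is $\nu^{\pm 1/2}$ times a character of order $1$ or $2$ (the classical Weil representation reducibility, recalled in \cite{HMa2}), and in $\widetilde{GL(2,F)}$ the representation $\chi_\psi\xi_i\times\chi_\psi\xi_j$ is reducible whenever $\xi_i\xi_j^{-1}=\nu^{\pm 1}$ (after multiplication by $\chi_\psi$ this is the usual Bernstein--Zelevinsky reducibility for $GL(2,F)$). Two bookkeeping facts make these usable: permuting the $GL(1)$-factors, or replacing any one of the $\xi_j$ by $\xi_j^{-1}$, replaces the inducing character of the minimal parabolic by a Weyl-group conjugate, hence does not change the semisimplification of the induced representation; and parabolic induction is exact and carries nonzero representations to nonzero ones. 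Thus if (1) fails at some $i$, our representation has the same semisimplification as $\bigl(\prod_{k\neq i}\chi_\psi\xi_k\bigr)\rtimes(\chi_\psi\xi_i\rtimes\omega_0)$, which has length at least $2$; and if (2) fails at some pair $i<j$, then, after possibly replacing $\xi_j$ by $\xi_j^{-1}$, we may assume $\xi_i\xi_j^{-1}=\nu^{\pm 1}$, so that our representation has the same semisimplification as $(\chi_\psi\xi_i\times\chi_\psi\xi_j)\times\bigl(\prod_{k\neq i,j}\chi_\psi\xi_k\bigr)\rtimes\omega_0$, again of length at least $2$. In either case the representation is reducible.

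For the second implication, write $\xi_i=\nu^{e_i}\mu_i$ with $\mu_i$ unitary and $e_i\in\mathbb{R}$. Using once more that permutations and inversions of the $\xi_i$ leave the semisimplification unchanged, I would arrange $e_1\ge\cdots\ge e_n\ge 0$ and collect indices according to the common value of the exponent; this exhibits the semisimplification of our representation as that of a Langlands standard module
\[
\nu^{a_1}\tau_1\times\cdots\times\nu^{a_r}\tau_r\rtimes\sigma_0,
\]
with $a_1>\cdots>a_r>0$, each $\tau_t$ a genuine unitary principal series representation of some $\widetilde{GL(k_t,F)}$ --- hence irreducible, since distinct unitary characters of $GL(1,F)$ are never linked --- and $\sigma_0$ a genuine unitary principal series representation of some $\widetilde{Sp(2k_0,F)}$, which is \emph{irreducible}, hence tempered, by \cite{HMa2}. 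I would then show that this standard module is irreducible by factoring the standard intertwining operator attached to it into rank-one intertwining operators, each induced from a $\widetilde{GL(2,F)}$-operator $\chi_\psi\zeta_1\times\chi_\psi\zeta_2\to\chi_\psi\zeta_2\times\chi_\psi\zeta_1$ or from a $\widetilde{Sp(2,F)}$-operator $\chi_\psi\zeta_1\rtimes\omega_0\to\chi_\psi\zeta_1^{-1}\rtimes\omega_0$, where $\zeta_1,\zeta_2$ range over the characters $\xi_k^{\pm 1}$. By the two reducibilities above, such a rank-one operator can fail to be an isomorphism only when $\zeta_1\zeta_2^{-1}=\nu^{\pm 1}$, respectively when $\zeta_1$ is $\nu^{\pm 1/2}$ times a character of order $1$ or $2$; conditions (1) and (2), together with the fact that distinct unitary characters of $GL(1,F)$ are never linked, exclude all of these. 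Hence the composite is an isomorphism; since the image of the standard intertwining operator is always the Langlands quotient, this forces the standard module, and therefore $(\chi_\psi\xi_1)\times\cdots\times(\chi_\psi\xi_n)\rtimes\omega_0$, to be irreducible.

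The step I expect to be the main obstacle is this second implication, and within it the point that the rank-one reducibilities of the genuine principal series of the metaplectic tower are \emph{exactly} those recorded in (1) and (2) --- in particular that, in contrast with $SL(2,F)=Sp(2,F)$, no reducibility occurs along the unitary directions (for instance at a non-trivial quadratic $\xi_i$, or at $\xi_i=\xi_j$ or $\xi_i=\xi_j^{-1}$ with $\xi_i$ unitary). This is precisely what the unitary-irreducibility theorem of \cite{HMa2} supplies, and it is used both to know that the tempered part $\sigma_0$ of the standard module is irreducible and to control the remaining intertwining operators at the non-regular parameters. The rest is to check that the structural tools invoked above --- the geometric lemma, the Langlands classification, and the factorization together with the non-degeneracy of intertwining operators --- are available over the metaplectic cover; these are by now standard (see \cite{HMu1} and the references in \cite{HMa2}), but the bookkeeping of the signs coming from the $\chi_\psi$-twist and from the Weyl group of type $C_n$ must be carried out with care.
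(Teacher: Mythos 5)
Your first implication is the same as the paper's: contraposition via the rank-one reducibilities $\chi_\psi\xi\rtimes\omega_0$ (from \cite{HMu2}, \cite{HMa1}) and $\chi_\psi\xi_i\times\chi_\psi\xi_j$, combined with the Weyl-group invariance of the semisimplification (which in the paper is the computation $(\chi_\psi\xi_i)\rtimes\omega_0=\a(\chi_\psi^{-1}\xi_i^{-1})\rtimes\omega_0=(\chi_\psi\xi_i^{-1})\rtimes\omega_0$ together with \eqref{chi-psi}). Your second implication, however, takes a genuinely different route. The paper argues by induction on $n$: after arranging $e(\xi_1)\geq\dots\geq e(\xi_n)\geq 0$, it uses the formula $\mu^*(\pi\rtimes\sigma)=M^*_\sim(\pi)\rtimes\mu^*(\sigma)$ to exhibit an irreducible constituent of multiplicity one in a suitable Jacquet module of the induced representation, concluding that the induced representation has an irreducible socle of multiplicity one; it then shows via cuspidal support and Frobenius reciprocity that every irreducible subquotient embeds into some member of the family \eqref{all}, all of which are isomorphic, so every subquotient coincides with that socle. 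You instead pass to the Langlands standard module attached to the exponents and factor the long intertwining operator into rank-one operators, using \cite{HMa2} both for the tempered part $\sigma_0$ and to rule out reducibility in the unitary directions. Your route is the classical one (it is how \cite{T-Comp} treats $Sp(2n,F)$), and it has the merit of making the role of the rank-one reducibility points completely transparent; the paper's route has the merit of using only the Jacquet-module calculus of \cite{HMu1} plus Frobenius reciprocity, which is fully available for the cover.

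The caveat, which you partly acknowledge yourself, is that your argument outsources its hardest step to intertwining-operator technology for $\widetilde{Sp(2n,F)}$: you need the factorization of the long operator, the identification of its image with the Langlands quotient, and above all the holomorphy and non-vanishing of the normalized rank-one operator $\chi_\psi\zeta\rtimes\omega_0\to\chi_\psi\zeta^{-1}\rtimes\omega_0$ at the self-inverse unitary points $\zeta=\zeta^{-1}$ --- exactly the points where, in the linear case, the operator is non-scalar and the representation reducible. That this operator is a \emph{nonzero} scalar there does follow once \cite{HMa2} gives irreducibility, but only after one has normalized the operator and checked it does not vanish identically at that point; none of this is verified in your write-up, and the paper explicitly chose its Jacquet-module argument to avoid having to develop this machinery (``we do not have here the theory of $R$-groups''). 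The Langlands quotient theorem and some intertwining-operator results for finite central extensions are in the two Ban--Jantzen papers cited in the bibliography, so your strategy can very likely be completed, but as written it is a proof outline resting on unverified analytic inputs rather than a complete proof.
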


 We expect that the above theorem must be well-known to the specialists in the field. Nevertheless, since we could not find a written reference, we present a proof of this simple fact here.
 
 In this paper we also give a formula related to computation of the Jacquet modules in the case of symplectic and metaplectic groups. 
 
 Discussions and questions of E. Lapid motivated us to think about the above irreducibility criterion. The irreducibility in the above theorem behaves in the same way as the irreducibility in the case of the split odd-orthogonal groups (see Remark \ref{rem}). A criterion for the irreducibility in the  non-genuine case, i.e. for  $Sp(2n,F)$, can be found in \cite{T-Comp}. The proof there does not apply to the case that we consider in this paper. Namely, we do not have here the theory of $R$-groups. Instead of that, we  apply  the main result of \cite{HMa2} which we have already mentioned. 
 
 We are very thankful to M. Hanzer for discussions and explanations regarding representation theory of metaplectic groups, and for  reading  the paper and corrections. 
 
The content of the paper is as follows. In the second section we introduce the notation used in the paper. The third section brings the proof of the above theorem while the last section   proves a  formula relating computation of Jacquet modules in the case of symplectic and metaplectic groups.

\section{Notation and preliminary results}

In this paper we shall consider central extensions
\begin{equation}
\label{G}
1\longrightarrow \mu_2\hookrightarrow \widetilde G\overset{p}\tha G\longrightarrow 1
\end{equation}
 of a $p$-adic general linear linear or a $p$-adic symplectic  group $G$, where $\mu_2=\{\pm1\}$. The group $\widetilde G$  has a natural topology of a locally compact totally disconnected group (see \cite{HMu1}). By a representation in this paper we shall always mean a smooth representation. A representation of $\widetilde G$ will be called genuine (resp. non-genuine) if $\mu_2$ acts in the representation space by a non-trivial (resp. trivial) character.

 The Grothendieck group of the category of all the smooth representations of $G$  will be denoted by $\mathfrak R(G)$.  Further, the  Grothendieck group of the category of all the genuine smooth representations of $\widetilde G$ (resp. non-genuine smooth representations of $\widetilde G$) will be denoted by $\mathfrak R(\widetilde G)^{gen}$ (resp. $\mathfrak R(\widetilde G)^{non-gen}$). Since each non-genuine representation of $\widetilde G$ factors uniquely to a representation of $G$, and conversely, each representation of $G$ in a natural way extends to a non-genuine representation of $\widetilde G$, we identify the  non-genuine representations of $\widetilde G$  with the representations of $G$.
 We  identify in a natural way $\mathfrak R(\widetilde G)^{non-gen}$ with $\mathfrak R( G)$.
 
Fix a $p$-adic field $F$ of characteristics different from 2. The normalized absolute value of $F$ will be denoted by $|\ |_F$. We denote by $\nu$ the character of $F^\t=GL(1,F)$ given by $x\mapsto |x|_F$.

For $GL(n):=GL(n,F)$, we realize $\widetilde {GL(n)}$ on the set $GL(n)\t\mu_2$ with the multiplication defined by $(g_1,\e_1)(g_2,\e_2)=(g_1g_2,\e_1\e_2 (\det(g_1),\det(g_2))_F)$, where $(\ \ ,\ \ )_F$ denotes the Hilbert symbol. 
In analogous way as in \cite{BZ2}, in \cite{HMu1} is introduced the multiplication $\t$ among genuine representations of $\widetilde{GL(n_1)}$ and $\widetilde{GL(n_2)}$.
 The sum of all
$\mathfrak R(\widetilde {GL(n)})^{gen}$, $n\geq 0$, will be denoted by $R^{gen}$. 
One can factor the multiplication through $R^{gen}\o R^{gen}$, and the obtained mapping is denoted by $m_\sim:R^{gen}\o R^{gen} \ra R^{gen}$. In analogous way as in \cite{BZ1}, in \cite{HMu1} is defined  the comultiplication, denoted  $m^*_\sim: R^{gen}\ra R^{gen}\o R^{gen}$. The transposition map on 
$R^{gen}\o R^{gen}$ (which switches the tensor factors) is denoted by $\kappa$.

In the case of symplectic group $Sp(2n):=Sp(2n,F)$ of rank $n$, $\widetilde{Sp(2n,F)}$ equals as a set to $Sp(2n,F)\t\mu_2$, while the multiplication is given by Rao's cocycle $c_{Rao}$ (\cite{R}) with the formula $(g_1,\e_1)(g_2,\e_2)=(g_1g_2,\e_1\e_2 c_{Rao}(g_1,g_2))$. 
For  $0\leq k\leq n$, there is a  parabolic subgroup\footnote{It is maximal parabolic subgroup except for $k=0$.} $P_{(k)}=M_{(k)}\ltimes N_{(k)}$ standard with respect to the upper triangular matrices in $Sp(2n)$, whose Levi factor $M_{(k)}$ is naturally isomorphic to $GL(k)\t Sp(2(n-k))$ (see \cite{HMu1}). Now parabolic subgroup $\widetilde P_{(k)}$ is defined to be the preimage $p^{-1}(P_{(k)})$. With $\widetilde M_{(k)} =p^{-1}(M_{(k)})$ we have $\widetilde P_{(k)}=\widetilde M_{(k)}\ltimes  N_{(k)}'$, where $N_{(k)}'=N_{(k)}\t\{1\}$.

   Now there is a natural epimorphism 
  \begin{equation}
  \label{epi}
  \widetilde {GL(k)} \t \widetilde {Sp(2(n-k))}\tha \widetilde M_{(k)}
  \end{equation}
(see \cite{HMu1}).   If $\pi$ and $\sigma$ are both genuine (or both non-genuine) representations of $\widetilde {GL(k)}$ and $ \widetilde {Sp(2(n-k))}$ respectively, then $\pi\o\sigma$ factors to a representation of $\widetilde M_{(k)}$, and the representation of $\widetilde{Sp(2n,F)}$ parabolically induced by $\pi\o\sigma$ from $\widetilde P_{(k)}$ is denoted by
  $$
  \pi\r\s.
  $$
 Define
$$
\a((g,\e)):=(\det(g),-1)_F.
$$
Then this is a character of $\widetilde {GL(k)}$ and   in the Grothendieck group holds
 \begin{equation}
  \label{tilde}
\pi\r\s=(\a\cdot \tilde\pi)\r\s.
 \end{equation}

 Further, the sum of all
  $\mathfrak R(\widetilde {Sp(n)})^{gen}$, $n\geq 0$,  is denoted by $R_S^{gen}$. Then $\rtimes$ factors in a natural way to
$\rtimes: R^{gen}\o R_S^{gen} \rightarrow R_S^{gen} $. For an irreducible genuine representation $\pi$ of  $\widetilde {Sp(n)}$, 
the Jacquet module with respect to $\widetilde P_{(k)}=\widetilde M_{(k)}\ltimes  N_{(k)}'$ can be pulled back to a representation of $\widetilde {GL(k)} \t \widetilde {Sp(2(n-k))}$ using epimorphism \eqref{epi}. Its  semisimplification can be considered as an element of $R^{gen}\o R_S^{gen}$. Now the sum of  all these semisimplifications for $0\leq k\leq n$ is denoted by
$$
\mu^*(\pi)\in R^{gen}\o R_S^{gen}.
$$
 We  extend  additively  $\mu^*$ to a mapping $\mu^*: R_S^{gen} \rightarrow R^{gen}\o R_S^{gen} $ (see details in \cite{HMu1}).

 Now  we shall recall of an important formula of  \cite{HMu1}  
\begin{equation}
\label{mu-met}
\mu^*(\pi\r\s)= M^*_\sim(\pi)\r\mu^*(\s),
\end{equation}
where
$$
M^*_\sim=(m_\sim\o id) \circ (\a\cdot \tilde{\ }\o m^*_\sim)\o\kappa\o m^*_\sim
$$
(here $\tilde{\ }$ denotes the contragredient mapping).

 Fix a non-trivial character $\psi$ of $F$ of even conductor. Define as in \cite{Ku}, page 231, the following character 
 $$
 \chi_\psi((g,\e))=\e\g(\psi_{\frac12})\g(\psi_{\frac{\det(g)}2})^{-1}
 $$
 of $\widetilde {GL(k)}$, where $\gamma(\eta)$ denotes the Weil index and $\psi_a(x)=\psi(ax)$.
We have $\chi_\psi^4\equiv 1$. 

The following three facts  are essential for us
$$
\a=\chi_{\psi}^2,
$$
$$
\chi_{\psi}|\mu_2\not\equiv 1,
$$
and for  representations $\pi_i$ of $\widetilde{GL(n_i)}$ holds
\begin{equation}
\label{chi-psi}
\chi_{\psi}(\pi_1\t\pi_2)\cong (\chi_{\psi}\pi_1)\t (\chi_{\psi}\pi_2)
\end{equation}
when both $\pi_i$ have  the  same restriction to $\mu_2$ (\cite{HMu1}).

Observe that the multiplication with $\chi_\psi$ defines an isomorphism of $R\ra R^{gen}$, which is also denoted by $\chi_\psi$.

For representations $\pi_i$ of $\widetilde{GL(n_i)}$, $1\leq i\leq 3$ and a representation $\s$ of $\widetilde{Sp(2n)}$ holds
\begin{equation}
\label{asso-GL}
\pi_1\t(\pi_2\t\pi_3)\cong (\pi_1\t\pi_2)\t\pi_3,
\end{equation}
and
\begin{equation}
\label{asso-Sp}
\pi_1\r(\pi_2\r\s)\cong (\pi_1\t\pi_2)\r\s.
\end{equation}
The above two isomorphisms follow from the general facts proved about representations of $l$-groups in \cite{BZ2}.

\section{Non-unitary principal series representations of metaplectic groups}

Each character $\phi$ of $F^\t$  can be written in a unique way  as
$$
\phi=\nu^{e(\phi)}\phi_u,
$$
where $e(\phi)\in\R $ and $\phi_u$ is unitary.

\begin{theorem}  
\label{theorem}
Let $\xi_1, \ldots, \xi_n $ be  (not necessarily unitary) characters of $F^\t$. 
Consider the following two 
conditions:
\begin{enumerate}
\item For any $1 \leq i \leq n$, 
$$
\xi_i \neq \nu^{\pm 1/2}\xi
$$
 for any character $\xi$ satisfying $\xi=\xi^{-1}$ (i.e. of order 1 or 2).
\item For any $1 \leq i < j \leq n$, 
$$
\xi_i \neq \nu^{\pm 1} \xi^{\pm
1}_j
$$ 
(all possible combinations of two signs are allowed).
\end{enumerate}
Then the non-unitary principal series representation 
$$
(\chi_{\psi}\xi_1) \times \ldots \times
(\chi_{\psi}\xi_n) \rtimes \omega_0
$$
 of  $\widetilde{Sp(2n,F)}$ is irreducible if and only if conditions (1) and (2) hold.  
\end{theorem}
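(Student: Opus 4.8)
The plan is to reduce the non-unitary case to the unitary case of \cite{HMa2} by a standard deformation argument in the Grothendieck group, combined with the reducibility points for rank-one metaplectic induction. First I would pass everything to $\mathfrak R(\widetilde{Sp(2n,F)})^{gen}$ and, using the isomorphism $\chi_\psi: R \to R^{gen}$ together with \eqref{chi-psi} and \eqref{asso-GL}, \eqref{asso-Sp}, reorganise the induced representation. Writing each $\xi_i=\nu^{e(\xi_i)}(\xi_i)_u$, one groups the characters into strings/orbits under the relations appearing in conditions (1) and (2): those $\xi_i$ lying in a common $\nu^{\mathbb Z}$-line (for condition (2)) and those whose unitary part has order $\le 2$ with $e(\xi_i)=\pm 1/2$ (for condition (1)). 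The key rank-one inputs are: (a) for a character $\xi$ of $F^\t$, $(\chi_\psi\xi)\rtimes\omega_0$ is irreducible unless $\xi=\nu^{\pm 1/2}\xi'$ with $\xi'$ of order $1$ or $2$; and (b) for characters $\xi,\xi'$ of $GL(1)$, $(\chi_\psi\xi)\times(\chi_\psi\xi')$ is irreducible unless $\xi=\nu^{\pm 1}\xi'^{\pm 1}$. Fact (b) is the Bernstein--Zelevinsky criterion for $\widetilde{GL(2)}$ (it matches the $GL$ theory via $\chi_\psi$); fact (a) is exactly the rank-one reducibility for the metaplectic group, which can be extracted from \cite{HMa2} at the unitary point $e=0$ together with the known complementary-series/reducibility interval.

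For the ``if'' direction (conditions (1) and (2) imply irreducibility), I would argue as follows. When both conditions hold, all the relevant $GL(1)$-blocks and the $\omega_0$-block are ``in general position''. I would then deform: consider the family $(\chi_\psi\nu^{t_1}(\xi_1)_u)\times\cdots\times(\chi_\psi\nu^{t_n}(\xi_n)_u)\rtimes\omega_0$ as the exponents $(t_1,\dots,t_n)$ vary. At $t_i=0$ for all $i$ this is a genuine \emph{unitary} principal series, hence irreducible by \cite{HMa2}. Irreducibility is an open condition, and the Jacquet-module formula \eqref{mu-met}, \eqref{mu-met}'s ingredient $M^*_\sim$, controls where reducibility can occur: the composition series can only change on the locus where one of the rank-one factors becomes reducible, i.e.\ precisely where condition (1) or (2) fails. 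Since by hypothesis the point $(e(\xi_1),\dots,e(\xi_n))$ is not on any such locus, one can join it to the origin by a path avoiding all reducibility walls; along such a path the length of the representation (computed via $\mu^*$, which is a continuous/locally constant invariant on each chamber) is constant, hence equal to $1$. Concretely, one shows $\mu^*$ of the induced representation is ``multiplicity-free in the top piece'' and that any proper subrepresentation would force, via Frobenius reciprocity and \eqref{mu-met}, a rank-one reducibility among the $\xi_i$'s or between some $\xi_i$ and $\omega_0$.

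For the ``only if'' direction (irreducibility implies (1) and (2)), I would prove the contrapositive: if (1) fails for some $i$, then $(\chi_\psi\xi_i)\rtimes\omega_0$ is already reducible, and by \eqref{asso-Sp} the full representation is $(\text{rest})\rtimes\big((\chi_\psi\xi_i)\rtimes\omega_0\big)$ (after reordering via \eqref{asso-GL}), which is a quotient/sub of something induced from a reducible representation and is itself reducible (an explicit subquotient splits off, detectable in $\mu^*$). If (2) fails for some $i<j$, then $(\chi_\psi\xi_i)\times(\chi_\psi\xi_j)$ is reducible in $\widetilde{GL(2)}$, and again \eqref{asso-GL}, \eqref{asso-Sp} exhibit the full representation as induced from a reducible one; a length count with $\mu^*$, or direct production of a proper subrepresentation, gives reducibility. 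The main obstacle is making the deformation argument in the ``if'' direction rigorous without an $R$-group theory: one must verify that $\mu^*$ (equivalently the semisimplified Jacquet modules) genuinely detects the length, i.e.\ that the socle is irreducible and equals the cosocle, on the whole open chamber. I expect this to be handled by showing the induced representation has a unique irreducible subrepresentation whose $\mu^*$ already accounts for all of $\mu^*$ of the induced representation --- using \eqref{mu-met} to compute both sides --- exactly as in the classical (non-metaplectic) case, with \cite{HMa2} supplying the anchor point.
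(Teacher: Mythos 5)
Your ``only if'' direction is fine and agrees with the paper's: the rank-one reducibilities of $(\chi_\psi\xi_i)\rtimes\omega_0$ and of $(\chi_\psi\xi_i)\times(\chi_\psi\xi_j)$ propagate to the full induced representation. The gap is in the ``if'' direction. The deformation argument you rely on does not work: the exponent point $(e(\xi_1),\dots,e(\xi_n))$ in general \emph{cannot} be joined to the origin by a path avoiding the reducibility walls. Already for $n=1$ and $\xi_1=\nu^{2}$, condition (1) holds and the theorem asserts irreducibility of $(\chi_\psi\nu^{2})\rtimes\omega_0$, but every path in $\mathbb R$ from $t=2$ to $t=0$ crosses the wall $t=1/2$, where $(\chi_\psi\nu^{1/2})\rtimes\omega_0$ is reducible. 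So the chamber containing the given point need not contain the unitary point, and the assertion that the length is locally constant on chambers --- which in the classical setting is itself extracted from intertwining-operator/$R$-group machinery, precisely what the paper stresses is unavailable here --- cannot transport the irreducibility statement of \cite{HMa2} to the point you care about.

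The fallback you sketch in your last paragraph is closer to a correct argument, but it omits the two steps that actually close it. The paper argues by induction on $n$: after using the Grothendieck-group symmetries to reduce to $e(\xi_1)\ge\dots\ge e(\xi_n)\ge 0$ and disposing of the unitary case by \cite{HMa2}, one takes the largest $i$ with $e(\xi_i)>0$ and shows via \eqref{mu-met} that the constituent $[(\chi_{\psi}\xi_1) \times \dots \times (\chi_{\psi}\xi_i)] \otimes [(\chi_{\psi}\xi_{i+1})\times\dots\times(\chi_{\psi}\xi_n) \rtimes \omega_0]$ --- irreducible by the $GL$ theory together with the inductive hypothesis --- occurs with multiplicity one in the Jacquet module; hence the induced representation has a unique irreducible subrepresentation, occurring with multiplicity one in its composition series. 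The decisive second step is that all $2^n\,n!$ representations \eqref{all} obtained by permuting the $\xi_i$ and replacing them by their inverses are actually \emph{isomorphic}, not merely equal in the Grothendieck group (this uses \eqref{-}, \eqref{asso-GL}, \eqref{asso-Sp} and the rank-one irreducibility); consequently an arbitrary irreducible subquotient, which by non-cuspidality and Frobenius reciprocity embeds into \emph{some} member of that family, must coincide with the unique subrepresentation, and multiplicity one forces irreducibility. Your sketch sets up no induction, does not isolate the multiplicity-one Jacquet constituent, and does not establish the isomorphism of the whole family, so as written it does not yield the conclusion.
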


\begin{proof}
For the proof, first observe that in the Grothendieck group we have
$$
(\chi_{\psi}\xi_i) \rtimes \omega_0=\a (\chi_{\psi}^{-1}\xi_i^{-1}) \rtimes \omega_0=(\chi_{\psi}\xi_i^{-1}) \rtimes \omega_0,
$$
i.e.
\begin{equation}
\label{-}
(\chi_{\psi}\xi_i) \rtimes \omega_0=(\chi_{\psi}\xi_i^{-1}) \rtimes \omega_0.
\end{equation}

From this and the property $\chi_{\psi}(\pi_1\t\pi_2)\cong (\chi_{\psi}\pi_1)\t (\chi_{\psi}\pi_2)$ that we have mentioned earlier, follow easily  that all the following elements
\begin{equation}
\label{all}
(\chi_{\psi}\xi_{p(1)}^{\epsilon_1}) \times \ldots \times
(\chi_{\psi}\xi_{p(n)}^{\epsilon_n}) \rtimes \omega_0
\end{equation}
define the same element of the Grothendieck group, for any  permutation $p$ of $\{1,\dots,n\}$ and any choice of  $\epsilon_i\in\{\pm\}$.

Recall that 
$$
(\chi_{\psi}\xi_i) \rtimes \omega_0
$$
 reduces if and only if $\xi_i = \nu^{\pm 1/2}\xi
$
 for some character $\xi$ satisfying $\xi=\xi^{-1}$ (this follows for example from \cite{HMu2}; it is explicitly written in \cite{HMa1}).
Now  this and the reducibility in the $\widetilde {GL(2)}$ case, directly imply that if (1) or (2) does not hold, we have reducibility.

We are going now to prove the irreducibility. Suppose that (1) and (2) hold. We shall do it by induction. For $n=1$ the claim obviously holds. Consider  $n>1$ and suppose that the irreducibility holds for smaller indexes.

Now \eqref{-}, \eqref{asso-GL}  and \eqref{asso-Sp} imply that all the representations \eqref{all} are not only equal in the Grothendieck group, but they are all isomorphic.

Observe that since for  all the choices of $\e_i$ in \eqref{all} we get the same element in the Grothendieck group, it is enough to prove the irreducibility for one of them. In this way we get that it is enough to prove irreducibility in the case
$$
e(\xi_1)\geq \dots \geq e(\xi_n)\geq 0.
$$
We shall assume this below.

If $e(\xi_1)=0$, \cite{HMa2} implies the irreducibility. Therefore we need to consider the case $e(\xi_1)>0$. Take the biggest $i$ such that $e(\xi_i)$ is still $>0$. Clearly, $i\geq 1$.

The formula \eqref{mu-met} 
and the fact that $M^*_\sim(\chi_\psi)= \chi_\psi\xi\o1+\chi_\psi\tilde\xi\o1+1\o \chi_\psi\xi$
 imply that 
\begin{equation}
\label{jmsq}
[(\chi_{\psi}\xi_1) \times \ldots \times (\chi_{\psi}\xi_i)] \o [(\chi_{\psi}\xi_{i+1})\t\dots\t
(\chi_{\psi}\xi_n) \rtimes \omega_0]
\end{equation}
has multiplicity one in the Jacquet module of $(\chi_{\psi}\xi_1) \times \ldots \times
(\chi_{\psi}\xi_n) \rtimes \omega_0$. Note that \eqref{jmsq} is irreducible since we know that the left hand side tensor factor is irreducible from the case of general linear groups (applying \eqref{chi-psi}), and right hand factor is irreducible by the inductive assumption. This implies that the last representation has unique irreducible subrepresentation, and that the irreducible subrepresentation has multiplicity one in the whole induced representation. 

Let $\pi$ be an irreducible subquotient of $(\chi_{\psi}\xi_1) \times \ldots \times
(\chi_{\psi}\xi_n) \rtimes \omega_0$. 
Then $\pi$ is not cuspidal (the proof  goes analogously as the proof of (d) in Theorem in section 2.4 of \cite{BZ2}).
Using this, and the formula for $\mu^*((\chi_{\psi}\xi_1) \times \ldots \times
(\chi_{\psi}\xi_n) \rtimes \omega_0)$, we get that a Jacquet module of $\pi$ has some 
$
(\chi_{\psi}\xi_{p(1)}^{\epsilon_1}) \otimes \ldots \otimes
(\chi_{\psi}\xi_{p(n)}^{\epsilon_n}) \otimes \omega_0
$
for a quotient, with $\e_i$ and $p$ as in \eqref{all}. Now the Frobenius reciprocity implies that $\pi$ embeds into some of the representations \eqref{all}. Since all the irreducible  subrepresentations for the family \eqref{all} are  isomorphic, and since  that subrepresentation has multiplicity one, we get irreducibility.
\end{proof}

\begin{remark}
\label{rem} Consider now split odd-orthogonal group $SO(2n+1;F)$. We shall use the notation introduced for these groups in \cite{T-Str}. Let $\xi_1, \ldots, \xi_n $ be  (not necessarily unitary) characters of $F^\t$. 
Then the non-unitary principal series representation 
$$
\xi_1 \times \ldots \times
\xi_n \rtimes 1_{SO(1)}
$$
 of the split special orthogonal group $SO(2n+1,F)$ is irreducible if and only if conditions 
(1) and (2) from Theorem \ref{theorem} hold for $\xi_1,\dots,\xi_n$.  

Therefore the representation $
\xi_1 \times \ldots \times
\xi_n \rtimes 1_{SO(1)}
$ of $SO(2n+1,F)$ is irreducible if and only if the representation $
(\chi_{\psi}\xi_1) \times \ldots \times
(\chi_{\psi}\xi_n) \rtimes \omega_0
$ of $\widetilde {Sp(2n)}$ is irreducible.
\end{remark}

\section{Linear and metaplectic cases}

In \cite{Z} is introduced a Hopf algebra constructed on the sum $R$ of $\mathfrak R(GL(n,F)), n\geq0$. 
The multiplication and comultiplication are denoted there by $m$ and $m^*$ respectively.

A multiplication   $\r$ between representations of general linear and symplectic groups is  introduced in \cite{T-Str}. There is proved that
$$
\mu^*(\pi\r\s)= M^*(\pi)\r\mu^*(\s),
$$
where
$$
M^*=(m\o id) \circ ( \tilde{\ }\o m^*)\o\kappa\o m^*
$$

The formula for $\mu^*(\tau\r\s)=M^*_\sim(\tau)\r\mu^*(\s)$, where $\tau$ and $\s$ are genuine representations of corresponding covering groups, was very useful in the case of metaplectic groups. We can write the genuine representation $\tau$  as
$$
\tau=\chi_\psi\pi,
$$
where
$\pi$ is a representation of a general linear group (i.e. a non-genuine representation of the covering group).
Now we shall write a very simple formula relating $M^*_\sim(\chi_\psi\pi)$ and $M^*(\pi)$:

\begin{lemma} We have
$$
 M^*_\sim=( \chi_{\psi}\o \chi_{\psi}) \circ M^* \circ \chi_{\psi}^{-1}.
$$
In other words, if $\pi $ is
 a finite length  representation  of ${GL(n)}$ and if we write 
 $$
 M^*(\pi)=\sum_i \b_i\o\g_i,
 $$
  then 
  $$
  M^*_\sim(\chi_\psi\pi)=\sum_i \chi_\psi\b_i\o\chi_\psi\g_i.
  $$
\end{lemma}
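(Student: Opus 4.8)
The plan is to unwind both sides of the claimed identity using the explicit definitions of $M^*$ and $M^*_\sim$ together with the three ``essential facts'' recorded in the second section: $\a=\chi_\psi^2$, the compatibility $\chi_\psi(\pi_1\t\pi_2)\cong(\chi_\psi\pi_1)\t(\chi_\psi\pi_2)$ of \eqref{chi-psi}, and the fact that $\chi_\psi$ is an isomorphism $R\ra R^{gen}$. Recall $M^*=(m\o id)\circ(\tilde{\ }\o m^*)\circ\kappa\circ m^*$ and $M^*_\sim=(m_\sim\o id)\circ(\a\cdot\tilde{\ }\o m^*_\sim)\circ\kappa\circ m^*_\sim$, so the two maps are built from the same five-step recipe, the only differences being: (i) $m$ versus $m_\sim$, (ii) $m^*$ versus $m^*_\sim$, and (iii) the contragredient $\tilde{\ }$ in the first factor is twisted by $\a=\chi_\psi^2$ in the metaplectic version. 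First I would record the three ``transport'' lemmas that $\chi_\psi$ intertwines these pieces of structure, namely $m_\sim\circ(\chi_\psi\o\chi_\psi)=\chi_\psi\circ m$ (this is exactly \eqref{chi-psi}), $m^*_\sim\circ\chi_\psi=(\chi_\psi\o\chi_\psi)\circ m^*$ (the dual compatibility of the comultiplication, which is the statement in \cite{HMu1} that $m^*_\sim$ is built from $m^*$ the same way $m_\sim$ is built from $m$, and follows formally by dualizing), and the twisted-contragredient identity $\a\cdot\widetilde{\chi_\psi\pi}\cong\chi_\psi\tilde\pi$, which holds because $\widetilde{\chi_\psi\pi}\cong\chi_\psi^{-1}\tilde\pi$ and $\a\chi_\psi^{-1}=\chi_\psi^2\chi_\psi^{-1}=\chi_\psi$.

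With those three facts in hand, the proof is a diagram chase: apply $M^*_\sim$ to $\chi_\psi\pi$ and push the $\chi_\psi$'s through the composition step by step, from right to left. Starting from $m^*_\sim(\chi_\psi\pi)=(\chi_\psi\o\chi_\psi)m^*(\pi)$, then applying $\kappa$ (which commutes with $\chi_\psi\o\chi_\psi$ since it is just transposition), then $m^*_\sim$ again on the left tensor leg, then the twisted contragredient on the left leg and nothing on the right, and finally $m_\sim\o id$ on the first pair, one finds at each stage that the operator differs from the corresponding stage of $M^*(\pi)$ precisely by multiplication by $\chi_\psi$ on each surviving tensor leg. Keeping careful track of how many tensor factors are ``alive'' at each step (three after the first $m^*_\sim$, then grouped, etc.) and checking that the twist $\a$ in the middle step is exactly what is needed to convert $\chi_\psi$-conjugated contragredient back into $\chi_\psi$ times the plain contragredient, yields $M^*_\sim(\chi_\psi\pi)=(\chi_\psi\o\chi_\psi)M^*(\pi)$, i.e. $M^*_\sim=(\chi_\psi\o\chi_\psi)\circ M^*\circ\chi_\psi^{-1}$.

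One point that needs a small argument rather than pure formalism is the hypothesis in \eqref{chi-psi} that both factors have the same restriction to $\mu_2$: here all representations occurring in the relevant Jacquet modules of a genuine representation are genuine, so $\chi_\psi$-times-non-genuine is genuine throughout and the compatibility applies uniformly; I would note this explicitly. The second item to be careful about is that $M^*$ and $M^*_\sim$ are only defined on finite-length representations (or rather extended additively to the Grothendieck groups), so the whole identity should be read as an identity of additive maps $R\ra R^{gen}\o R^{gen}$ (respectively on the genuine side), and it suffices to verify it on irreducible $\pi$, where $m^*(\pi)$ is an honest finite sum $\sum_i\b_i\o\g_i$; then the ``in other words'' reformulation is immediate.

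I expect the main obstacle to be purely bookkeeping: the map $M^*$ is a composite of four operations acting on tensor powers of $R$, some acting on only one of two tensor legs, and one must be scrupulous about which leg each $\chi_\psi$ sits on and about the associativity/transposition juggling so that the middle twist by $\a$ lands on exactly the leg carrying the contragredient. There is no deep input beyond the already-cited results of \cite{HMu1}; the content is checking that $\chi_\psi$ is a morphism for every structure map in sight and that $\a=\chi_\psi^2$ does the job of correcting the one place where $\chi_\psi$ fails to commute with the contragredient on the nose.
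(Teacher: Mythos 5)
Your proposal is correct and follows essentially the same route as the paper: the same three intertwining facts ($m_\sim\circ(\chi_\psi\o\chi_\psi)=\chi_\psi\circ m$, the dual relation $m^*_\sim\circ\chi_\psi=(\chi_\psi\o\chi_\psi)\circ m^*$, and $\a\cdot\widetilde{\chi_\psi\pi}\cong\chi_\psi\tilde\pi$ via $\a=\chi_\psi^2$), followed by the same right-to-left diagram chase through the five-step composition. The extra care you flag about the $\mu_2$-restriction hypothesis in \eqref{chi-psi} and about reading the identity in the Grothendieck group is sensible but not developed further in the paper either.
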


\begin{proof}
First the formula
$
\chi_{\psi}(\pi_1\t\pi_2)\cong (\chi_{\psi}\pi_1)\t (\chi_{\psi}\pi_2)
$
for representations $\pi_i$ of $GL(n_i)$ implies
$$
 m_\sim \circ (\chi_{\psi}\o \chi_{\psi})=  \chi_{\psi} \circ m.
$$

Consider the projection $p:\widetilde{GL(n)}\ra GL(n)$, and the standard Levi subgroup $M=:GL(n_1)\t GL(n_2)$ in $GL(n)$, where $n_1+n_2=n$. Let $\widetilde M$ be the preimage of $M$, and denote by $\phi: \widetilde M\ra M$ the restriction of the projection $p$.
Before Proposition 4.1 in \cite{HMu1} it is observed that the equality
$
(\chi_{\psi}|\tilde M)\circ \phi= \chi_{\psi} \o \chi_{\psi}
$
holds.

Now one directly sees that the following dual relation holds
$$
 m^*_\sim\circ \chi_{\psi}=(\chi_{\psi}\o \chi_{\psi})\circ m^*.
$$

Let now  $\pi$ be a finite length representation of $GL(k)$ (i.e. a non-genuine representation of $\widetilde{GL(k)}$).
  Consider
$$
 M^*_\sim(\chi_{\psi}\pi) = \big( m_\sim \o id\big)\circ \big(\tilde{\ }\cdot\a\o m^*_\sim\big)\circ \kappa \circ  m^*_\sim\circ \chi_{\psi}(\pi)
$$
$$
= \big( m_\sim \o id\big)\circ \big(\tilde{\ }\cdot \a\o m^*_\sim\big)\circ \kappa \circ \big(\chi_{\psi}\o \chi_{\psi}\big)\circ m^*(\pi)
$$
$$
= \big( m _\sim\o id\big)\circ \big(\tilde{\ }\cdot \a\o m^*_\sim\big) \circ \big(\chi_{\psi}\o \chi_{\psi}\big)\circ \kappa\circ m^*(\pi)
$$
$$
= \big( m_\sim \o id\big)\circ \big((\tilde{\ }\cdot\a \circ \chi_{\psi})\o  (m^*_\sim \circ  \chi_{\psi})\big)\circ \kappa\circ m^*(\pi)
$$
$$
= \big( m_\sim \o id\big)\circ \big(\a  \chi_{\psi}^{-1}\cdot \tilde{\ }\o((\chi_{\psi}\o \chi_{\psi})\circ m^*)\big)\circ \kappa\circ m^*(\pi)
$$
$$
= \big( m_\sim \o id\big)\circ \big( \chi_{\psi}\cdot\tilde{\ }\o(\chi_{\psi}\o (\chi_{\psi})\circ m^*)\big)\circ \kappa\circ m^*(\pi)
$$
$$
= \big( m_\sim \o id\big)\circ \big ( \chi_{\psi}\o\chi_{\psi}\o \chi_{\psi}) \circ \big( \tilde{\ }\o id \o id\big)\circ \big(id\o m^*\big)\circ \kappa\circ m^*(\pi)
$$
$$
= \big( (m_\sim \circ  ( \chi_{\psi}\o\chi_{\psi}))\o \chi_{\psi}\big) \circ\big ( \tilde{\ }\o id \o id\big)\circ \big(id\o m^*\big)\circ \kappa\circ m^*(\pi)
$$
$$
= \big(( \chi_{\psi}\circ m)\o \chi_{\psi}\big) \circ \big( \tilde{\ }\o id \o id\big)\circ \big(id\o m^*\big)\circ \kappa\circ m^*(\pi)
$$
$$
=  \big( \chi_{\psi}\o \chi_{\psi}\big) \circ \big(  m\o id) \circ ( \tilde{\ }\o id \o id\big)\circ \big(id\o m^*\big)\circ \kappa\circ m^*(\pi)
$$
$$
= \big ( \chi_{\psi}\o \chi_{\psi}\big) \circ \big(  m\o id\big) \circ \big( \tilde{\ }\o  m^*\big)\circ \kappa\circ m^*(\pi)
$$
$$
= \big ( \chi_{\psi}\o \chi_{\psi}\big) \circ M^*(\pi),
$$
i.e. we have proved that 
$$
 M^*_\sim(\chi_{\psi}\pi) =\big( \chi_{\psi}\o \chi_{\psi}\big) \circ M^*(\pi).
$$
\end{proof}

\end{document}